\definecolor{lime}{HTML}{A6CE39}
\DeclareRobustCommand{\orcidicon}{
	\begin{tikzpicture}
	\draw[lime, fill=lime] (0,0) 
	circle [radius=0.16] 
	node[white] {{\fontfamily{qag}\selectfont \tiny ID}};
	\draw[white, fill=white] (-0.0625,0.095) 
	circle [radius=0.007];
	\end{tikzpicture}
	\hspace{-2mm}
}
\def\vbar{\mathchoice{\vrule height6.3ptdepth-.5ptwidth.8pt\kern- .8pt}
{\vrule height6.3ptdepth-.5ptwidth.8pt\kern-.8pt} {\vrule
height4.1ptdepth-.35ptwidth.6pt\kern-.6pt} {\vrule
height3.1ptdepth-.25ptwidth.5pt\kern-.5pt}}
\def\<{\langle}
\def\>{\rangle}
\newtheorem{theorem}{Theorem}[section]
\newtheorem{lemma}[theorem]{Lemma}
\newtheorem{corollary}[theorem]{Corollary}
\newtheorem{proposition}[theorem]{Proposition}
\newtheorem{example}[theorem]{Example}
\theoremstyle{definition}
\newtheorem{definition}{Definition}[section]
\theoremstyle{remark}
\newtheorem{remark}{Remark}[section]
\begin{document}

\title{Common Fixed Point of the Commutative F-contraction Self-mappings with uniquely bounded sequence }

\author{\textbf{Djamel Deghoul $^{1}$\thanks{E-mail: djamel.deghoul@univ-setif.dz}\,,
Zoheir Chebel\orcidZ{}$^{2,3}$\thanks{E-mail: zoheir\_chebel1@yahoo.fr}\,,
Abdellatif Boureghda\orcidA{} $^{1}$\thanks{Corresponding author, E-mail: abdellatif.boureghda@univ-setif.dz; abdellatif.boureghda@uha.fr}\;, 
Salah Benyoucef $^{4}$ \thanks{E-mail: saben21@yahoo.fr}}\\\\
{\small 1. Laboratory of Fundamental and Numerical Mathematics, Faculty of Sciences, Ferhat Abbas University Sétif 19137, Algeria.}\\
{\small 2. Department of Mathematics, Si El Haoues University Center Barika, Algeria.}\\
{\small 3. Laboratory of Advanced Electronics and Telecommunications, El Bachir El Ibrahimi University Bordj Bou Arreridj, Algeria.}\\ 
{\small 4. Department of Mathematics University of 20 août 1955 Skikda, Algeria.}}
\date{}
\maketitle

\begin{quote}
\textbf{Abstract:} 
We establish the existence of a common fixed point for mappings that satisfy and extend the F-contraction condition. To support our findings, we present pertinent definitions and properties associated with F-contraction mappings. Additionally, we establish an analogue to the Banach contraction theorem. Our results contribute to the broader understanding of this field by extending and generalizing existing findings in the literature.

\textbf{Keywords:} Contraction mapping; fixed point; common fixed point.

\noindent{\bf 2010 Mathematics Subject Classification}:  Primary {54H25}, secondary {47H10.}
\end{quote}


\section{Introduction}
\label{sec-1}
In 1976, Jungck \cite{G} pioneered the proof that, when provided with two continuous functions, $f$ and $g$, defined on a complete metric space, with the additional property of being commuting functions and $g$ being an F-contraction such that, the range of $g$ be included in the one's of $f.$
Then $f$ and $g$ must posses a unique common fixed point. It is essential to note that the inclusion condition in Jungck's theorem is sufficient but not necessary for the existence of common fixed points.
In this current study, we maintain all the aforementioned conditions while substituting the inclusion criterion with the requirement of a bounded Picard sequence. This adjustment is demonstrated to be both necessary and sufficient for Jungck's result to hold true.
\section{preliminaries}
Allow us to revisit certain definitions and established results pertaining to common fixed points.

\begin{definition}\cite{chebel}
Consider two metric spaces, $(X , d)$ and $(X^{\prime}, d^{\prime})$  equipped with distances $d$ and $ d^{\prime}$ respectively. Let $f$ and $g$ be mappings defined as $ f , g : X \longrightarrow X^{\prime}$. A mapping $g$ is deemed an F-contraction, if there exists a real constant $ 0< k < 1$, satisfying the condition: 
\begin{equation}\label{*}
d^{\prime}(g(x), g(y)) \leq k d^{\prime}( f (x), f (y)), \forall x, y \in X .
\end{equation}
We denote this relationship as $g - k - f$ If $f$ is continuous, we refer to $g - k - f$ as a continuous contraction. 
\end{definition}

The objective of this work is to provide a generalization of the following theorem by eliminating the convergent condition from the hypothesis and streamlining the assumptions.

\begin{theorem}\cite{chebel}
Consider a continuous contraction mapping $g - k - f$ in the complete metric space $X$. Additionally, assume that the mappings $f$ and $g$ commute, and there exists an element $x_{0} \in X $ such that the Picard sequence $\{ f_{n} (x_{0})\}_{n\geq 0}$ converges to $t_{0} \in X $. In this case, the Picard sequence $\{g_{n} (x_{0})\}_{n\leq 0}$ converges to a point $r \in X$. Furthermore, the Picard sequence $ \{f_{n}(r )\}_{n\geq 0}$ is bounded, then the sequence $\{g_{n} (t)\}_{n\geq 0}$ converges to $r$ which stands as the unique common fixed point of the mappings $f$ and $g.$  
\end{theorem}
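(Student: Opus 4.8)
The plan is to deduce the statement from three observations: that $g$ is automatically continuous once $f$ is; that the defining inequality \eqref{*} iterates, thanks to commutativity, to $d(g^{n}(x),g^{n}(y))\le k^{n}d(f^{n}(x),f^{n}(y))$; and that the geometric factor $k^{n}$ kills any bounded discrepancy between Picard $f$-orbits. Throughout, $f^{n}$ and $g^{n}$ denote the $n$-fold iterates. First I would record the preliminaries: if $x_{m}\to x$ then $d(g(x_{m}),g(x))\le k\,d(f(x_{m}),f(x))\to 0$, so $g$ is continuous; from $fg=gf$ one gets $f^{m}g^{n}=g^{n}f^{m}$, and iterating \eqref{*} while commuting copies of $f$ past copies of $g$ gives
\begin{equation*}
d\bigl(g^{n}(x),g^{n}(y)\bigr)\le k^{n}\,d\bigl(f^{n}(x),f^{n}(y)\bigr),\qquad x,y\in X,\ n\ge 0 .
\end{equation*}
Finally, since $f$ is continuous and $f^{n}(x_{0})\to t_{0}$ we get $f(t_{0})=\lim_{n}f^{n+1}(x_{0})=t_{0}$, so $\{f^{n}(t_{0})\}$ is the constant orbit $\{t_{0}\}$; together with the convergent orbit $\{f^{n}(x_{0})\}$ and the orbit $\{f^{n}(r)\}$ (bounded by hypothesis), all the $f$-orbits used below are bounded.

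Next I would establish the convergence of $\{g^{n}(x_{0})\}$. Applying the displayed inequality with $x=g(x_{0})$, $y=x_{0}$ and using commutativity,
\begin{equation*}
d\bigl(g^{n+1}(x_{0}),g^{n}(x_{0})\bigr)\le k^{n}\,d\bigl(g(f^{n}(x_{0})),f^{n}(x_{0})\bigr).
\end{equation*}
Since $f^{n}(x_{0})\to t_{0}$ and $g$ is continuous, the numbers $d(g(f^{n}(x_{0})),f^{n}(x_{0}))$ converge to $d(g(t_{0}),t_{0})$, hence are bounded by some $M$, so $d(g^{n+1}(x_{0}),g^{n}(x_{0}))\le Mk^{n}$; a triangle-inequality estimate then shows $\{g^{n}(x_{0})\}$ is Cauchy, and completeness gives $g^{n}(x_{0})\to r$ for some $r\in X$. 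Continuity of $g$ yields $g(r)=\lim_{n}g^{n+1}(x_{0})=r$.

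Then I would transfer this limit to other orbits. If $\{f^{n}(y)\}$ is bounded, the displayed inequality gives $d(g^{n}(y),g^{n}(x_{0}))\le k^{n}d(f^{n}(y),f^{n}(x_{0}))\to 0$ (the factor $d(f^{n}(y),f^{n}(x_{0}))$ stays bounded while $k^{n}\to 0$), hence $g^{n}(y)\to r$. Taking $y=f(x_{0})$, whose orbit $\{f^{n+1}(x_{0})\}$ converges and so is bounded, gives $g^{n}(f(x_{0}))\to r$; but $g^{n}(f(x_{0}))=f(g^{n}(x_{0}))\to f(r)$ by commutativity and continuity of $f$, so $f(r)=r$ and $r$ is a common fixed point of $f$ and $g$. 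Taking $y=t_{0}$, whose $f$-orbit is constant hence bounded, gives $g^{n}(t_{0})\to r$. For uniqueness, if $f(r')=r'=g(r')$ then $d(r,r')=d(g(r),g(r'))\le k\,d(f(r),f(r'))=k\,d(r,r')$, forcing $r=r'$ since $0<k<1$.

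I do not expect any single step to be a real obstacle; the two points that need care are the iterated inequality, where commutativity is essential to keep the clean power $k^{n}$ (rather than a mixed product of $f$- and $g$-distances), and the verification that each $f$-orbit fed into it is bounded — which is precisely where the hypotheses that $\{f^{n}(x_{0})\}$ converges and that $\{f^{n}(r)\}$ is bounded get used (the latter being, in hindsight, automatic once $f(r)=r$ is known). The one mild subtlety is the order of steps: one should derive $g(r)=r$ before $f(r)=r$ so that the continuity of $g$ can be applied directly.
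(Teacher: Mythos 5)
Your argument is correct, but note first that the paper does not actually prove this statement: it is quoted from \cite{chebel} as background, and the closest proof in the text is that of Theorem \ref{theorem principal}, which weakens the hypothesis from convergence of $\{f^{n}(x_{0})\}$ to mere boundedness. You share with the paper the key iterated inequality $d(g^{n}(x),g^{n}(y))\leq k^{n}d(f^{n}(x),f^{n}(y))$ (the paper's Lemma \ref{1}), but after that the routes diverge. The paper, having only boundedness to work with, must pass through the auxiliary orbit $\{(f\circ g)^{n}(x_{0})\}$, extract its limit $l$, and then run a second Cauchy argument on $\{g^{n}(l)\}$ with the case split on parity (Lemmas \ref{2}--\ref{6}). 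You instead exploit the convergence $f^{n}(x_{0})\to t_{0}$ directly: continuity of $g$ (which you correctly derive from continuity of $f$ and condition \eqref{*}) makes $d(g(f^{n}(x_{0})),f^{n}(x_{0}))$ convergent, hence bounded, which immediately gives the geometric Cauchy estimate for $\{g^{n}(x_{0})\}$; your ``transfer'' observation that $g^{n}(y)\to r$ for every $y$ with bounded $f$-orbit then cleanly yields $f(r)=r$ (via $y=f(x_{0})$) and $g^{n}(t_{0})\to r$ (via $y=t_{0}$). This is genuinely more direct and more elementary, at the price of using the stronger convergence hypothesis that the paper's main theorem is designed to avoid. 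Your side remark that the boundedness of $\{f^{n}(r)\}$ is never needed (it is automatic once $f(r)=r$) is accurate and in fact exposes a redundancy in the statement as quoted.
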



\begin{remark}
If we substitute the mapping $f$ with an identity mapping in the condition \ref{1}, we retrieve the classical contraction mapping scenario, leading us to the well-known Banach fixed-point theorem, as discussed in \cite{Gustave}.
\end{remark}

\section{Main results and theorems}

The proof of our theorems relies on the establishment of certain definitions, properties, propositions, and lemmas.

 In what follows, let $x_{0}$ denote an element of a non-empty complete metric space $X$.
For the sake of notation, we introduce $g^{0}(x_{0})=x_{0}, f^{0}(x_{0}))=x_{0}$ and
inductively  $g^{n+1}(x_{0})=g(g^{n}(x_{0})), f^{n+1}(x_{0})=f(f^{n}(x_{0}))$, where $ n\in\{1, 2, ...\}.$

The results presented in this subsection are commonly referred to as a variant of Banach's contraction principle.
\begin{theorem}\label{theorem principal}
Let $f$ and $g$ be two commuting self-mappings on the complete metric space $X$. Assume that the maps $f$ and $g$ satisfy the condition \ref{*} with $f$ being continuous. If there exists an element $x_{0}\in X$ such that the sequence $\{f^{n}(x_{0})\}_{n\geq 0}$ is bounded, then the maps $f$ and $g$ possess a unique common fixed point in $X$.
\end{theorem}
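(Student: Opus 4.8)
The plan is to use commutativity to upgrade the relative estimate (\ref{*}) into honest geometric decay along the $g$-orbit, and then to let boundedness of the $f$-orbit absorb the residual term. The first step is to prove, by induction on $n$, the estimate
\begin{equation*}
d\bigl(g^{n}(x),g^{n}(y)\bigr)\le k^{n}\,d\bigl(f^{n}(x),f^{n}(y)\bigr),\qquad \forall\,x,y\in X,\ n\ge 0 .
\end{equation*}
The base case is trivial, and the inductive step is $d\bigl(g^{n+1}(x),g^{n+1}(y)\bigr)=d\bigl(g^{n}(g(x)),g^{n}(g(y))\bigr)\le k^{n}d\bigl(f^{n}(g(x)),f^{n}(g(y))\bigr)=k^{n}d\bigl(g(f^{n}(x)),g(f^{n}(y))\bigr)\le k^{n+1}d\bigl(f^{n+1}(x),f^{n+1}(y)\bigr)$, where the middle equality uses $f^{n}g=gf^{n}$ and the last inequality is (\ref{*}) applied to $f^{n}(x),f^{n}(y)$.

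Next I would record a preliminary observation: (\ref{*}) together with continuity of $f$ forces $g$ to be continuous, since $x_{m}\to x$ gives $d(f(x_{m}),f(x))\to 0$, hence $d(g(x_{m}),g(x))\le k\,d(f(x_{m}),f(x))\to 0$. Then I would show that the Picard sequence $u_{n}:=g^{n}(x_{0})$ is Cauchy. Applying the displayed estimate with $x=g(x_{0})$, $y=x_{0}$ and using $f^{n}g=gf^{n}$ gives $d(u_{n+1},u_{n})\le k^{n}d\bigl(g(f^{n}(x_{0})),f^{n}(x_{0})\bigr)$. Now $\{f^{n}(x_{0})\}$ is bounded by hypothesis, and $\{g(f^{n}(x_{0}))\}$ is bounded because $d\bigl(g(f^{n}(x_{0})),g(f^{m}(x_{0}))\bigr)\le k\,d\bigl(f^{n+1}(x_{0}),f^{m+1}(x_{0})\bigr)$; hence $d\bigl(g(f^{n}(x_{0})),f^{n}(x_{0})\bigr)\le M$ for some constant $M$, so $d(u_{n+1},u_{n})\le Mk^{n}$ and $d(u_{m},u_{n})\le Mk^{n}/(1-k)\to 0$. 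By completeness $u_{n}\to r$ for some $r\in X$, and continuity of $g$ yields $g(r)=\lim_{n}g(u_{n})=\lim_{n}u_{n+1}=r$.

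To get $f(r)=r$ I would invoke commutativity once more: $f(u_{n})=f\bigl(g^{n}(x_{0})\bigr)=g^{n}\bigl(f(x_{0})\bigr)$, and the displayed estimate with $x=f(x_{0})$, $y=x_{0}$ gives $d\bigl(g^{n}(f(x_{0})),u_{n}\bigr)\le k^{n}d\bigl(f^{n+1}(x_{0}),f^{n}(x_{0})\bigr)\le M'k^{n}\to 0$, again by boundedness of the $f$-orbit. Thus $f(u_{n})=g^{n}(f(x_{0}))\to r$; but $f(u_{n})\to f(r)$ by continuity of $f$, so $f(r)=r$, and $r$ is a common fixed point. Uniqueness follows directly from (\ref{*}): if $f(r)=g(r)=r$ and $f(s)=g(s)=s$, then $d(r,s)=d(g(r),g(s))\le k\,d(f(r),f(s))=k\,d(r,s)$, forcing $r=s$ since $k<1$. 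The main obstacle — really the only non-routine point — is the very first step: recognizing that commutativity is exactly what converts the $f$-relative contraction (\ref{*}) into the absolute geometric bound $k^{n}d(f^{n}(x),f^{n}(y))$, and that the boundedness hypothesis on $\{f^{n}(x_{0})\}$ is precisely what is needed to keep the residual factors $d(f^{n}(\cdot),f^{n}(\cdot))$ under control; everything after that is standard Banach-type bookkeeping.
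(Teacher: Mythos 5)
Your proof is correct, and after the first step it follows a genuinely more direct route than the paper's. Both arguments hinge on the same key estimate $d(g^{n}(x),g^{n}(y))\le k^{n}d(f^{n}(x),f^{n}(y))$ (the paper's Lemma \ref{1}, proved the same way via $f^{n}\circ g=g\circ f^{n}$). From there the paper takes a detour: it first shows the Picard sequence of the composite $f\circ g$ converges to a fixed point $l$ of $f\circ g$ (Lemmas \ref{2}, \ref{3}), then iterates $g$ starting from $l$ to produce a second limit $l_{1}$ with $g(l_{1})=l_{1}$ (Lemma \ref{4}), then handles $g\circ f$ (Lemmas \ref{5}, \ref{6}) before assembling the common fixed point. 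You instead work directly with $u_{n}=g^{n}(x_{0})$: boundedness of the $f$-orbit controls $d\bigl(g(f^{n}(x_{0})),f^{n}(x_{0})\bigr)$, giving $d(u_{n+1},u_{n})\le Mk^{n}$ and hence a single limit $r$ with $g(r)=r$; the clean comparison $d\bigl(g^{n}(f(x_{0})),g^{n}(x_{0})\bigr)\le k^{n}d\bigl(f^{n+1}(x_{0}),f^{n}(x_{0})\bigr)\to 0$ then yields $f(r)=r$ in one stroke. Your version buys two things: it eliminates the intermediate fixed point of $f\circ g$ and the two-stage limit, and it explicitly records that condition \ref{*} plus continuity of $f$ forces $g$ to be continuous --- a fact the paper uses tacitly in Lemma \ref{3} ("leveraging the continuity of the maps $f$ and $g$") even though the theorem only assumes $f$ continuous. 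The uniqueness argument is identical in both.
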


To establish the proof of the main theorem, we require the following lemmas in succession.

\begin{lemma}\label{1}
Let $f$ and $g$ be two  self-mappings commuting with each other. Assuming that $f$ and $g$ are satisfy condition \eqref{*}, the following inequality holds.
\begin{equation}\label{eq1}
d(g^{n}(x), g^{n}(y))\leq k^{n} d(f^{n}(x),f^{n}(y)),\ \ \forall x,y \in X, \forall n\in\mathbb{N}.
\end{equation}
\end{lemma}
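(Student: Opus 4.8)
The plan is to prove \eqref{eq1} by induction on $n$, using the commutativity of $f$ and $g$ together with the basic F-contraction estimate \eqref{*}. The base case $n=0$ is trivial since $g^{0}=\mathrm{id}=f^{0}$ gives equality, and the case $n=1$ is precisely \eqref{*}.

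For the inductive step, suppose \eqref{eq1} holds for some $n\geq 1$. First I would apply \eqref{*} to the pair $g^{n}(x),g^{n}(y)$ to obtain
\begin{equation}
d\bigl(g(g^{n}(x)),g(g^{n}(y))\bigr)\leq k\, d\bigl(f(g^{n}(x)),f(g^{n}(y))\bigr).
\end{equation}
The crucial point is now to move the outer $f$ past the iterate $g^{n}$. Since $f$ and $g$ commute, an easy subinduction shows $f\circ g^{n}=g^{n}\circ f$, hence $f(g^{n}(x))=g^{n}(f(x))$ and similarly for $y$. Substituting this gives
\begin{equation}
d\bigl(g^{n+1}(x),g^{n+1}(y)\bigr)\leq k\, d\bigl(g^{n}(f(x)),g^{n}(f(y))\bigr).
\end{equation}
Applying the induction hypothesis to the pair $f(x),f(y)$ yields $d\bigl(g^{n}(f(x)),g^{n}(f(y))\bigr)\leq k^{n} d\bigl(f^{n}(f(x)),f^{n}(f(y))\bigr)=k^{n} d\bigl(f^{n+1}(x),f^{n+1}(y)\bigr)$, and combining the last two inequalities gives $d\bigl(g^{n+1}(x),g^{n+1}(y)\bigr)\leq k^{n+1} d\bigl(f^{n+1}(x),f^{n+1}(y)\bigr)$, completing the induction.

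The only mildly delicate point is the commutation identity $f\circ g^{n}=g^{n}\circ f$, which itself needs a short induction: it holds for $n=1$ by hypothesis, and if $f\circ g^{n}=g^{n}\circ f$ then $f\circ g^{n+1}=f\circ g\circ g^{n}=g\circ f\circ g^{n}=g\circ g^{n}\circ f=g^{n+1}\circ f$. Everything else is a direct chain of inequalities, so I do not anticipate any real obstacle; the main thing to be careful about is applying the induction hypothesis to the shifted arguments $f(x),f(y)$ rather than to $x,y$ themselves, which is what makes the powers of $k$ and the iterates of $f$ line up correctly.
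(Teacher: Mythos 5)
Your proof is correct and follows essentially the same route as the paper: repeated application of \eqref{*} combined with the commutation identity $f\circ g^{n}=g^{n}\circ f$ to trade one $g$ for one $f$ at each step, gaining a factor of $k$. The only difference is presentational — you organize it as a clean forward induction on $n$ (with the induction hypothesis applied to the shifted points $f(x),f(y)$), whereas the paper unfolds the same chain of inequalities informally from $n$ downward.
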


\begin{proof}
Consider two elements $x, y $ in the space $X$. From the condition \ref{*}, then,
$$ d(g\circ g^{n-1}(x), g\circ g^{n-1}(y))\leq k d(f\circ g^{n-1}(x), f\circ g^{n-1}(y)).$$

Utilizing the commutativity, we observe that
\begin{equation}\label{A11}
f\circ g^{n}=g^{n}\circ f, \forall n\in \mathbb{N}.
\end{equation}
The same line of reasoning yields:
$$ d(f\circ g^{n-1}(x), f\circ g^{n-1}(y))= d(g^{n-1}\circ f(x), g^{n-1}\circ f(y))$$. Applying conditions \ref{A11} and \ref{*}, once again, we obtain:
$$ d(g^{n-1}\circ f(x), g^{n-1}\circ f(y))\leq k d(g^{n-2}\circ f^{2}(x), g^{n-2}\circ f^{2}(y)).$$ Through induction, we derive:
$$ d(g^{n}(x), g^{n}(y))\leq k^{n} d(f^{n}(x),f^{n}(y)). $$
This concludes our proof.
\end{proof}

\begin{lemma}\label{2}
Consider two commuting maps  $f$ and $g$ satisfying condition \ref{*}. The ensuing inequality is established:.
\begin{equation}\label{eq1}
d((f\circ g)^{n}(x), (f\circ g)^{n-1}(y))\leq k^{n-1} d(g\circ f^{2n-1}(x), f^{2n-2}(y)) \leq s k^{n-1}, \forall x,y \in X.
\end{equation}
\end{lemma}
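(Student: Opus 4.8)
The plan is to iterate condition \eqref{*} along the two intertwined orbits generated by $f$ and $g$, exploiting commutativity to shuffle the compositions into the convenient form $g^{m}\circ f^{p}$ at each step, exactly as in the proof of Lemma \ref{1}. First I would observe that $(f\circ g)^{n} = f^{n}\circ g^{n}$ and $(f\circ g)^{n-1}=f^{n-1}\circ g^{n-1}$ by the commutativity relation \eqref{A11}, so the left-hand side is $d\bigl(f^{n}\circ g^{n}(x),\,f^{n-1}\circ g^{n-1}(y)\bigr)$. The idea is then to peel off factors of $g$ one at a time using \eqref{*}: each application replaces an outer $g$ by an outer $f$ at the cost of a factor $k$, and commutativity lets me keep reorganizing so that the $f$'s accumulate on the appropriate side. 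After $n-1$ such steps one reaches $d\bigl((f\text{ and }g\text{-words of total }f\text{-degree }2n-1)(x),\,(\ldots)(y)\bigr)$ with the constant $k^{n-1}$ in front; the bookkeeping of the exponents is the routine part, and it should land precisely on $k^{n-1}\,d\bigl(g\circ f^{2n-1}(x),\,f^{2n-2}(y)\bigr)$ once one counts that the $x$-orbit has absorbed $n$ copies of $f$ from the outer $(f\circ g)^n$ plus $n-1$ more from unwinding the $g^n$, i.e.\ $f$-degree $2n-1$ with a single surviving $g$, while the $y$-orbit accumulates $2n-2$.

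For the second inequality, $d\bigl(g\circ f^{2n-1}(x),\,f^{2n-2}(y)\bigr)\le s$, I would invoke the standing hypothesis of the ambient setting: the points $f^{m}(x_0)$ form a bounded set, and — after checking that the relevant orbits stay inside a bounded region, or simply taking $s$ to be an upper bound for $d$ over the closure of the orbit of $x_0$ together with the image of $g$ on it — the quantity $d\bigl(g\circ f^{2n-1}(x),\,f^{2n-2}(y)\bigr)$ is bounded by a constant $s$ independent of $n$. (In the intended application $x=y=x_0$, so both arguments lie in the bounded Picard orbit and its $g$-translate, and $s:=\sup_{m,\ell}d\bigl(g\circ f^{m}(x_0),f^{\ell}(x_0)\bigr)$ is finite.) I would state this explicitly so the chain $\le k^{n-1}d(\cdots)\le sk^{n-1}$ closes.

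The main obstacle I anticipate is purely combinatorial: getting the exponents in \eqref{eq1} to come out as literally $2n-1$ and $2n-2$ rather than, say, $n$ and $n-1$, which forces me to be careful that unwinding $(f\circ g)^n = f^n\circ g^n$ contributes its full $f^n$ and that each of the $n-1$ contraction steps on the $g^n$-part contributes one further $f$ to the $x$-argument while the $y$-argument, starting from $f^{n-1}\circ g^{n-1}(y)$, accrues $f$'s at the same rate and ends at $f^{2n-2}$. A secondary subtlety is making the boundedness constant $s$ genuinely uniform in $n$: this is immediate when $x=y=x_0$ by the hypothesis of Theorem \ref{theorem principal}, but for general $x,y$ one should either restrict to that case or carry along the hypothesis that the orbits in question are bounded. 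Once these two points are pinned down, the proof is a direct induction mirroring Lemma \ref{1}.
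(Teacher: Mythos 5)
Your proposal is correct and follows essentially the same route as the paper: rewrite $(f\circ g)^{n}$ and $(f\circ g)^{n-1}$ via commutativity as $g^{n-1}$ applied to $g\circ f^{n}(x)$ and $f^{n-1}(y)$ respectively, apply the iterated contraction (Lemma \ref{1}) to pull out $k^{n-1}$ and land on $d(g\circ f^{2n-1}(x), f^{2n-2}(y))$, then bound by $s$ using the boundedness of the Picard orbit. Your observation that the uniform bound $s$ is only justified for $x=y=x_{0}$ (with $s=\sup_{n}d(g\circ f^{2n-1}(x_{0}), f^{2n-2}(x_{0}))$) matches what the paper actually proves, since its argument is likewise carried out only at $x_{0}$.
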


\begin{proof}
Let $x_{0}$ be an elements in $X$. Applying the lemma\ref{1}, we deduce.

\begin{eqnarray}
d((f\circ g)^{n}(x_{0}), (f\circ g)^{n-1}(x_{0})) &=& d(g^{n-1}(g\circ f^{n})(x_{0})), g^{n-1}( f^{n-1}(x_{0}))) \nonumber \\
                                           &\leq& k^{n-1} d(g\circ f^{2n-1}(x_{0}), f^{2n-2}(x_{0})) \nonumber \\
                                           &\leq& s k^{n-1}. \nonumber
\end{eqnarray}
By virtue of condition \ref{*} and boundedness of  $\{f^{n}(x_{0})\}_{n\geq 0}$, we deduce that the distance $d(g\circ f^{2n-1}(x), f^{2n-2}(y)$ is also bounded.
Let $s=\sup_{n\geq 1} d(g\circ f^{2n-1}(x_{0}), f^{2n-2}(x_{0}))$
This completes the proof.

\end{proof}

\begin{lemma} \label{3}
Under the hypothesis of the theorem \ref{theorem principal} the sequence $ \{(f\circ g)^{n}(x_{0})\}_{n\geq 0}$  converges in the complete metric space $X.$
\end{lemma}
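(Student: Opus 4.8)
The goal is to show that $\{(f\circ g)^n(x_0)\}_{n\geq 0}$ is Cauchy, whence it converges by completeness of $X$. My plan is to prove the sequence is Cauchy by bounding $d((f\circ g)^{n+p}(x_0),(f\circ g)^n(x_0))$ using a telescoping estimate built on Lemma~\ref{2}. First I would write, for $m>n$,
\begin{equation}
d((f\circ g)^m(x_0),(f\circ g)^n(x_0))\leq \sum_{j=n+1}^{m} d((f\circ g)^j(x_0),(f\circ g)^{j-1}(x_0)).
\end{equation}
Lemma~\ref{2} (applied with $x=y=x_0$) gives $d((f\circ g)^j(x_0),(f\circ g)^{j-1}(x_0))\leq s\,k^{j-1}$, so the right-hand side is bounded by $s\sum_{j=n+1}^{m} k^{j-1}\leq \dfrac{s\,k^{n}}{1-k}$. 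Since $0<k<1$, this tends to $0$ as $n\to\infty$, uniformly in $m$, so the sequence is Cauchy.

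The second and final step is to invoke completeness of $X$: a Cauchy sequence in the complete metric space $X$ converges, so $\{(f\circ g)^n(x_0)\}_{n\geq 0}$ has a limit in $X$. One has to be slightly careful that Lemma~\ref{2} is stated for general $x,y$, and that the constant $s$ there is the supremum $\sup_{n\geq 1} d(g\circ f^{2n-1}(x_0),f^{2n-2}(x_0))$, which is finite precisely because $\{f^n(x_0)\}_{n\geq 0}$ is bounded (together with condition~\eqref{*}, which makes $g$ Lipschitz-type relative to $f$); this finiteness is what makes the geometric series argument work, and it is where the boundedness hypothesis of Theorem~\ref{theorem principal} enters.

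The only genuine obstacle is the bookkeeping in matching the index conventions of Lemma~\ref{2} — in particular checking that the triangle-inequality telescoping uses exactly the consecutive-difference quantity that Lemma~\ref{2} estimates, and that the finiteness of $s$ has indeed been secured. Once those are in place the argument is the standard geometric-series Cauchy criterion, so I would keep the write-up short: state the telescoping inequality, substitute the Lemma~\ref{2} bound, sum the geometric series, conclude Cauchy, and then cite completeness of $X$ to finish.
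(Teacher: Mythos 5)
Your argument is correct and is essentially identical to the paper's own proof: both telescope via the triangle inequality, apply the consecutive-difference bound $s\,k^{j-1}$ from Lemma~\ref{2}, sum the geometric series to get the Cauchy property, and conclude by completeness. The only difference is that the paper additionally notes $f\circ g(l)=l$ by continuity, which goes beyond what the lemma's statement actually claims.
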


\begin{proof}
To establish the convergence, we first demonstrate that the sequence is a Cauchy sequence. Consider integers $n, m$, where $n > m$. By employing the triangle inequality and Lemma\ref{2}, we find
\begin{eqnarray}
d((f\circ g)^{n}(x_{0}), (f\circ g)^{m}(x_{0}))&\leq& \sum_{j=m}^{j=n-1}d((f\circ g)^{j}(x_{0}), (f\circ g)^{j+1}(x_{0})). \nonumber \\
                                               & \leq & \sum_{j=m}^{j=n-1}s k^{j}.\nonumber \\
                                               &\leq& s \frac{k^{m}-k^{n}}{1-k}.
\end{eqnarray}
By allowing $n\rightarrow \infty$ and $m\rightarrow \infty$, the sequence becomes a Cauchy sequence. Therefore, by completeness, it converges to a limit $l$.
Leveraging the continuity of the maps $f$ and $g,$ we observe that $f\circ g(l)=l$. This completes the proof of the lemma.
\end{proof}

\begin{corollary}
Under the hypothesis of theorem \ref{theorem principal}, If the sequence $ \{f^{n}(x_{0})\}_{n\geq 0}$ is bounded, it follows that the sequence $ \{(f\circ g)^{n}(x_{0})\}_{n\geq 0}$ is also bounded in the metric space $X$.
\end{corollary}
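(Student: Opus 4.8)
The plan is to derive the boundedness of $\{(f\circ g)^{n}(x_{0})\}_{n\geq 0}$ directly from the geometric decay estimate of Lemma~\ref{2}, without having to reuse completeness. First I would check that, under the hypotheses of Theorem~\ref{theorem principal}, the constant $s=\sup_{n\geq 1} d(g\circ f^{2n-1}(x_{0}), f^{2n-2}(x_{0}))$ occurring in Lemma~\ref{2} is actually finite. Indeed $\{f^{m}(x_{0})\}_{m\geq 0}$ is bounded by assumption, and applying \eqref{*} (equivalently Lemma~\ref{1} with $n=1$) gives $d(g\circ f^{m}(x_{0}), g(x_{0}))\leq k\, d(f^{m+1}(x_{0}), f(x_{0}))$, so $\{g\circ f^{m}(x_{0})\}_{m\geq 0}$ is bounded as well; since both endpoints stay in a bounded set, the distances $d(g\circ f^{2n-1}(x_{0}), f^{2n-2}(x_{0}))$ stay bounded and $s<\infty$.

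Next I would telescope. Writing $u_{j}=(f\circ g)^{j}(x_{0})$, the triangle inequality together with Lemma~\ref{2} yields
\[
d(u_{n},u_{0})\leq \sum_{j=0}^{n-1} d(u_{j+1},u_{j})\leq \sum_{j=0}^{n-1} s\,k^{j}\leq \frac{s}{1-k}
\]
for every $n\geq 0$. Hence every term of the sequence lies in the closed ball $\overline{B}\!\left(x_{0},\tfrac{s}{1-k}\right)$, which is precisely the assertion that $\{(f\circ g)^{n}(x_{0})\}_{n\geq 0}$ is bounded in $X$.

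I would also remark that the statement follows more softly from Lemma~\ref{3}: the sequence $\{(f\circ g)^{n}(x_{0})\}_{n\geq 0}$ converges in $X$, and every convergent sequence in a metric space is bounded. I do not expect any genuine obstacle here; the only point that needs a line of care is verifying that the supremum $s$ is finite, i.e. that the boundedness of the Picard sequence $\{f^{n}(x_{0})\}$ really propagates to the quantities controlled by Lemma~\ref{2}, which is the short estimate indicated in the first paragraph.
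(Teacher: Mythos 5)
Your argument is correct, and it follows the same route the paper implicitly intends: the corollary is stated without proof immediately after Lemma \ref{3}, and both of your derivations (the telescoping sum $\sum_j s\,k^{j}\leq s/(1-k)$ from Lemma \ref{2}, and the remark that the convergent sequence of Lemma \ref{3} is automatically bounded) are just the paper's own estimates read off again. Your extra paragraph checking that $s=\sup_{n\geq 1} d(g\circ f^{2n-1}(x_{0}), f^{2n-2}(x_{0}))$ is finite is a welcome addition, since the paper's Lemma \ref{2} asserts this boundedness only in passing.
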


\begin{lemma}\label{4}
Consider the hypothesis of lemma \ref{2} which asserts the existence of a constant $c$ such that:
\begin{eqnarray}\label{eq}
\forall n, m \in \mathbb{N}, n > m,\ \ d(g^{n}(l), g^{m}(l))\leq c \frac{k^{\frac{m}{2}}-k^{\frac{n}{2}}}{1-\sqrt{k}}.
\end{eqnarray}
Additionally, it is established that the sequence $\{g^{n}(l)\}_{n\geq 0}$ is convergent.
\end{lemma}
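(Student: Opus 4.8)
The plan is to exploit the fixed‑point relation $f\circ g(l)=l$ produced by Lemma \ref{3}, together with the commutativity of $f$ and $g$, in order to realize the orbit $\{g^{n}(l)\}_{n\geq 0}$ as a backward orbit of $f$. The first thing I would record is the identity
\[
f\bigl(g^{n}(l)\bigr)=g^{n-1}(l),\qquad n\geq 1 .
\]
Since $f$ commutes with $g$, the map $f\circ g$ commutes with $g$, hence with every power $g^{n-1}$, so $f\circ g^{n}(l)=g^{n-1}\bigl(f\circ g(l)\bigr)=g^{n-1}(l)$. (Note that the boundedness hypothesis of Lemma \ref{2} is only what guarantees, through Lemmas \ref{2} and \ref{3}, that such an $l$ exists; it is not needed again below.)

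Next, the idea is to feed this identity into the contraction condition \eqref{*} to get a two‑step recursion for the consecutive displacements $\delta_{n}:=d\bigl(g^{n+1}(l),g^{n}(l)\bigr)$. Writing $g^{n+1}(l)=g(g^{n}(l))$ and $g^{n}(l)=g(g^{n-1}(l))$ and applying \eqref{*} gives, for $n\geq 2$,
\[
\delta_{n}=d\bigl(g(g^{n}(l)),g(g^{n-1}(l))\bigr)\leq k\,d\bigl(f(g^{n}(l)),f(g^{n-1}(l))\bigr)=k\,d\bigl(g^{n-1}(l),g^{n-2}(l)\bigr)=k\,\delta_{n-2}.
\]
Iterating along even and odd indices separately yields $\delta_{2j}\leq k^{j}\delta_{0}$ and $\delta_{2j+1}\leq k^{j}\delta_{1}$, hence $\delta_{n}\leq c\,(\sqrt{k})^{\,n}$ for every $n$, where $c:=\max\{\,d(g(l),l),\ d(g^{2}(l),g(l))/\sqrt{k}\,\}$ is a finite constant, since it only involves the distances between the concrete points $l,g(l),g^{2}(l)$.

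Then, for $n>m$ a telescoping estimate via the triangle inequality gives
\[
d\bigl(g^{n}(l),g^{m}(l)\bigr)\leq\sum_{j=m}^{n-1}\delta_{j}\leq c\sum_{j=m}^{n-1}(\sqrt{k})^{\,j}\leq c\,\frac{k^{m/2}-k^{n/2}}{1-\sqrt{k}},
\]
which is exactly \eqref{eq} with this explicit $c$. Because $0<k<1$ forces $\sqrt{k}<1$, the right‑hand side tends to $0$ as $m\to\infty$, so $\{g^{n}(l)\}_{n\geq 0}$ is a Cauchy sequence and therefore converges by completeness of $X$.

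I expect the only genuinely delicate point to be the very first step, namely verifying $f\bigl(g^{n}(l)\bigr)=g^{n-1}(l)$, since that is where the relation $f\circ g(l)=l$ and the commutativity must both be invoked, in the correct order; once that identity is available, the recursion $\delta_{n}\leq k\,\delta_{n-2}$ and the geometric‑series bookkeeping — in particular keeping track of the even/odd split that produces the exponent $\tfrac{n}{2}$ (and the factor $1-\sqrt{k}$) rather than $n$ — are entirely routine.
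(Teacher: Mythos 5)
Your proposal is correct and follows essentially the same route as the paper: both arguments use the relation $f\circ g(l)=l$ from Lemma \ref{3} together with commutativity to obtain the two-step recursion $d(g^{n}(l),g^{n-1}(l))\leq k\,d(g^{n-2}(l),g^{n-3}(l))$, then split into even and odd indices to get a geometric bound in $\sqrt{k}$ and conclude via a telescoping sum and completeness. The only cosmetic differences are that you package the key step as the explicit identity $f(g^{n}(l))=g^{n-1}(l)$ (which the paper performs inline on the compositions) and that your constant $c$ stops the odd-index recursion at $d(g^{2}(l),g(l))$ where the paper pushes one step further to $d(f(l),l)$.
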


\begin{proof}
Let $n$ be a positive integer. It is straightforward to verify the following inequality:\\

\begin{eqnarray}
d(g^{n}(l), g^{n-1}(l))&=& d(g\circ g^{n-1}(l), g\circ g^{n-2}(l)). \nonumber\\ \textit{By the condition \ref{*} }
&\leq & k d(f\circ g^{n-1}(l), f\circ g^{n-2}(l)). \nonumber\\ \ \ \ \textit{By exploiting the commutativity  }
&\leq & k d( g^{n-2}(g\circ f (l)), g^{n-3} (g\circ f(l))), \nonumber\\ \ \ \  \textit{and by lemma \ref{3}}
&\leq & k d( g^{n-2}(l), g^{n-3}(l)). \nonumber
\end{eqnarray}
and employing induction, we distinguish two cases: \\
1. If $n$ is odd, then: \\
$$d( g^{n}(l), g^{n-1}(l))\leq k^{\frac{n-1}{2}} d(g(l), l)$$
2. If n is even, then
$$d( g^{n}(l), g^{n-1}(l))\leq k^{\frac{n}{2}} d(f(l), l).$$
Set $$ c=\max \{d(g(l), l), \sqrt{k} \ d(f(l), l)\}.$$
It follows: \\
$$d( g^{n}(l), g^{n-1}(l)) \leq c k^{\frac{n-1}{2}}. $$
For any positive integers $n$ and $m$ with $n>m$, we have
\begin{eqnarray}
\forall n > m,\ \ d(g^{n}(l), g^{m}(l))\leq \sum_{j=m}^{n-1} d(g^{j}(l), g^{j+1}(l))&\leq& c \sum_{j=m}^{n-1} k^{\frac{j}{2}}, \nonumber \\
                                                                                      &\leq& c \frac{k^{\frac{m}{2}}-k^{\frac{n}{2}}}{1-\sqrt{k}}. \nonumber \\
\end{eqnarray}
By letting $n\rightarrow \infty$ and $m\rightarrow \infty,$ we obtain $ d(g^{n}(l), g^{m}(l))\rightarrow 0,$ therefore the sequence $\{g^{n}(l)\}_{n\geq 0}$ is a Cauchy sequence and, by completeness, converges to a limit $l_{1}$. The continuity argument of $g$ yields $g(l_{1})=l_{1}$,
concluding the proof of the lemma.
\end{proof}

\begin{lemma} \label{5}
For every integer $n \geq 1,$ the following equality is satisfied.\\
$(g \circ f )(g^{n}(l))=g^{n}(l)$.
\end{lemma}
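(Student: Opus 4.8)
The plan is to exploit the two facts already secured in the preceding lemmas: first, that $f\circ g(l)=l$ (Lemma~\ref{3}), and second, that $f$ and $g$ commute, hence $f\circ g^{n}=g^{n}\circ f$ for all $n$ as recorded in \eqref{A11}. From these two ingredients alone the desired identity $(g\circ f)(g^{n}(l))=g^{n}(l)$ should follow by a short direct manipulation, with no appeal to the contraction inequality.

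First I would observe that, since $f$ and $g$ commute, $g\circ f = f\circ g$, so it suffices to prove $(f\circ g)(g^{n}(l))=g^{n}(l)$. Next I would push the $g^{n}$ to the outside: using commutativity repeatedly, $(f\circ g)(g^{n}(l)) = f\bigl(g^{n+1}(l)\bigr) = g^{n}\bigl(f(g(l))\bigr) = g^{n}\bigl((f\circ g)(l)\bigr)$. Now apply Lemma~\ref{3}, which gives $(f\circ g)(l)=l$; substituting yields $g^{n}\bigl((f\circ g)(l)\bigr) = g^{n}(l)$, which is exactly the claim. The argument is essentially a one-line computation once the commutation relation is used to slide $g^{n}$ through $f\circ g$.

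I would present this as an induction on $n$ for cleanliness, even though the direct computation above already works: the base case $n=0$ (or $n=1$) is $(g\circ f)(l)=l$, immediate from Lemma~\ref{3} together with $g\circ f=f\circ g$; and for the inductive step one writes $(g\circ f)(g^{n+1}(l)) = (g\circ f)\circ g\,(g^{n}(l)) = g\circ (g\circ f)(g^{n}(l)) = g\bigl(g^{n}(l)\bigr) = g^{n+1}(l)$, where the middle equality uses commutativity to move the extra $g$ past $g\circ f$ and the third equality is the induction hypothesis.

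The only real subtlety — and the step I would be most careful about — is making sure the commutativity is invoked legitimately: we need $f\circ g = g\circ f$ as a genuine equality of maps on all of $X$ (which is the standing hypothesis), and we need the identity $f\circ g^{n}=g^{n}\circ f$, which is \eqref{A11} proved in Lemma~\ref{1}. There is no analytic content here; the lemma is purely algebraic bookkeeping, and the ``hard part'' is nothing more than keeping track of which composite is being simplified. I would therefore keep the write-up to a few lines, citing Lemma~\ref{3} for $(f\circ g)(l)=l$ and \eqref{A11} for the commutation of $g^{n}$ with $f$.
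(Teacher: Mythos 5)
Your argument is correct and is exactly the ``straightforward'' computation the paper leaves implicit: commutativity gives $(g\circ f)(g^{n}(l))=g^{n}\bigl((f\circ g)(l)\bigr)$, and Lemma~\ref{3} gives $(f\circ g)(l)=l$. Since the paper's own proof is just the word ``Straightforward,'' your write-up supplies the missing details without deviating from the intended route.
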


\begin{proof}
Straightforward.
\end{proof}

\begin{lemma}\label{6}
Under the conditions of theorem \ref{theorem principal}, the composed mapping $g\circ f$  possesses a fixed point in the space $X.$
\end{lemma}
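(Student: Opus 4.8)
The plan is to exhibit an explicit point of $X$ that $g\circ f$ fixes, built from the limits produced by the previous lemmas. By Lemma~\ref{3} the sequence $\{(f\circ g)^{n}(x_{0})\}_{n\geq 0}$ converges to a limit $l$ with $(f\circ g)(l)=l$; by Lemma~\ref{4} the sequence $\{g^{n}(l)\}_{n\geq 0}$ converges to a limit $l_{1}$ with $g(l_{1})=l_{1}$, and by Lemma~\ref{5} every iterate satisfies $(g\circ f)(g^{n}(l))=g^{n}(l)$. The natural candidate for a fixed point of $g\circ f$ is therefore $l_{1}=\lim_{n\to\infty}g^{n}(l)$.

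First I would apply the identity of Lemma~\ref{5}, namely $(g\circ f)(g^{n}(l))=g^{n}(l)$, to the whole sequence and pass to the limit as $n\to\infty$. Since $f$ and $g$ are continuous, $g\circ f$ is continuous, so $(g\circ f)\bigl(\lim_{n}g^{n}(l)\bigr)=\lim_{n}(g\circ f)(g^{n}(l))=\lim_{n}g^{n}(l)$, i.e. $(g\circ f)(l_{1})=l_{1}$. This shows $l_{1}$ is the desired fixed point of the composed mapping $g\circ f$, which is all that is claimed. One could alternatively observe that $g(l_{1})=l_{1}$ from Lemma~\ref{4} already forces $(g\circ f)(l_{1})=g(f(l_{1}))$, and then use commutativity together with Lemma~\ref{5} to identify $f(l_{1})$; but the direct limit argument is cleaner.

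The only genuine point requiring care — and hence the main obstacle — is the interchange of $g\circ f$ with the limit: one must verify that $g^{n}(l)\to l_{1}$ in the metric $d$ (guaranteed by Lemma~\ref{4}) and that continuity of both $f$ and $g$ (hypotheses of Theorem~\ref{theorem principal}) legitimately transfers to continuity of the composition, so that $\lim_{n}(g\circ f)(g^{n}(l)) = (g\circ f)(\lim_{n} g^{n}(l))$. Once this is in place, combining it with the fixed-point relation of Lemma~\ref{5} closes the argument immediately, and no further estimate is needed.
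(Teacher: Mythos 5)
Your proposal is correct and follows essentially the same route as the paper: both combine the convergence $g^{n}(l)\to l_{1}$ from Lemma \ref{4} with the identity $(g\circ f)(g^{n}(l))=g^{n}(l)$ from Lemma \ref{5}, then pass to the limit using continuity of $g\circ f$. The only caveat (present in the paper's proof as well) is that continuity of $g$ is not a stated hypothesis of Theorem \ref{theorem principal} and must be deduced from condition \eqref{*} together with the continuity of $f$.
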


\begin{proof}
Utilising limit lemmas \ref{4} and \ref{5}, we establish that  the sequence $(g\circ f) (g^{n}(l))$ is a Cauchy sequence in the complete space $X.$ Consequently, it converges to the limit $ l_{1}\in X.$
Taking the limit and exploiting the continuity of $g\circ f,$ we deduce that: $$ \lim\limits_{n \to +\infty} (g\circ f) (g^{n})(l)=l_{1}=g\circ f(l_{1}).$$
This concludes the proof.
\end{proof}

Now, we proceed to prove the main result of Theorem \ref{theorem principal}.

\begin{proof}
Proof of theorem \ref{theorem principal}.\\
 The necessity of the condition is evident. If $f$ and $g$ have a common fixed point $t$ then the sequence $(f^{n}(t))_{n\geq 0}$ is bounded.
 Regarding sufficiency, suppose that there exists an element $x_{0}\in X,$ such that the sequence $(f^{n}(x_{0}))$ is bounded. It is easy to see that $l_{1}$ is a common fixed point of the maps $f$ and $g.$
In fact, by lemmas \ref{4}, \ref{6} and the commutativity, we have:
 $$ g\circ f(l_{1})=f (g(l_{1}))=f(l_{1})=g(l_{1})=l_{1}. $$

Next, we prove the uniqueness of the common fixed point of $f$ and $g$. \\
Let $l_{1}$ and $l_{2}$ be two common fixed points of $f$ and $g$ i.e $f(l_{1})=g(l_{1})=l_{1}$ and  $f(l_{2})=g(l_{2})=l_{2}.$  By computing the distance between $l_{1}$ and $l_{2}$, we obtain,
$$d(l_{1}, l_{2})= d(g(l_{1}), g(l_{2}))\leq k d(f(l_{1}),f(l_{2}))= k d(l_{1}, l_{2}).$$
From the last inequality, we deduce that, $(1-k)d(l_{1}, l_{2})\leq 0.$  Since $k<1,$ then $l_{1}=l_{2}.$ This shows the uniqueness of the common fixed point and concludes the proof of our theorem.
\end{proof}

\begin{example}
Let $X =[1, +\infty[$ with the usual metric. For integer number $p, q $ such that $p<q$, define $f,g : X\rightarrow X $,$f(x) =x^{p}\ \textit{and} \ g(x) =x^{q}.$ \\
All conditions of theorem  are fulfilled. In fact, $f $ and $g$ commute, the contraction is given by $ |f(x)-f(y)|\leq \frac{p}{q}|g(x)-g(y)|, \forall x,y \in X $ and finally, the iterations of the Picard sequence $g^{n}(1)=1$ are bounded at the point $x_{0}=1.$ In this situation, the unique common fixed point of $f$ and $g$ is the element $1.$

\end{example}

\begin{remark}
The following corollary is an example illustrating that the inclusion condition in theorem 1.1 in \cite{G} and the boundedness condition in our theorem are independent of each other.
\end{remark}

\begin{corollary}
For positive integers $n, m\geq 1$ and a positive real number  $ k>1$, let $f$ be a continuous map defined on the complete metric space $X$ into itself.  Assume that the inequality
\begin{equation}
d(f^{n}(x),f^{n}(y))\geq k d(x,y), \textit{holds.}
\end{equation}
The map $f$ has a unique fixed point in $X$ if and only if there exists an element $x_{0}\in X $ such that the sequence $\{f^{m}(x_{0})\}_{m\geq 1}$ is bounded.
\end{corollary}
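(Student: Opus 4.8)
The plan is to reduce this corollary to an application of Theorem \ref{theorem principal}. The hypothesis $d(f^n(x), f^n(y)) \geq k\, d(x,y)$ with $k > 1$ is an \emph{expansive} condition, which is the reverse of the F-contraction inequality \eqref{*}. To fit the framework of the theorem, I would introduce the auxiliary map $h = f^n$ and rewrite the expansiveness as a genuine F-contraction: set $g := \mathrm{id}_X$ and let $f$ play the role of the expanding map; then the inequality $d(\mathrm{id}(x), \mathrm{id}(y)) = d(x,y) \leq \frac{1}{k}\, d(f^n(x), f^n(y))$ holds with contraction constant $\frac{1}{k} \in (0,1)$. More precisely, the cleanest route is to first establish that a continuous $f$ with $d(f^n(x), f^n(y)) \geq k\, d(x,y)$ is injective, so that $f^n$ is a bijection of $X$ onto $f^n(X)$; combined with the expansiveness, $f^n$ is a homeomorphism onto its image whose inverse $\varphi := (f^n)^{-1}$ is a contraction with constant $1/k$ on $f^n(X)$. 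The strategy then splits into the two implications of the ``if and only if''.

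For the forward direction (existence of a fixed point $\Rightarrow$ bounded orbit): if $f(p) = p$ then $f^m(p) = p$ for all $m$, so $\{f^m(x_0)\}$ with $x_0 = p$ is the constant, hence bounded, sequence --- this is immediate, exactly as in the necessity part of Theorem \ref{theorem principal}. For the converse (bounded orbit $\Rightarrow$ unique fixed point), I would argue that boundedness of $\{f^m(x_0)\}_{m \geq 1}$ forces $X$ (or at least the closure of the orbit) to contain a fixed point. The key step is to apply Theorem \ref{theorem principal}, or rather its underlying mechanism, to the contraction $\varphi = (f^n)^{-1}$ paired with the identity: the image $\overline{f^n(X)}$ is a complete metric space, $\varphi$ is a $\frac{1}{k}$-contraction there, and the orbit $\{\varphi^m(x_0)\}$ (equivalently $\{f^{-nm}(x_0)\}$, if it exists) should be controlled. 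Alternatively, and more directly, I would observe that the boundedness of $\{f^m(x_0)\}$ together with expansiveness bounds the diameter: $d(x_0, f^{nj}(x_0)) \leq$ (const), while expansiveness iterated gives $d(f^{nj}(x_0), f^{n(j+1)}(x_0)) \geq k^j d(x_0, f^n(x_0))$, which would blow up unless $d(x_0, f^n(x_0)) = 0$, i.e. $x_0$ is a fixed point of $f^n$. From $f^n(x_0) = x_0$ and injectivity/continuity of $f$ one deduces $f(x_0) = x_0$ by a short argument (the orbit $x_0, f(x_0), \dots, f^{n-1}(x_0)$ is a periodic cycle, and expansiveness on consecutive elements of the cycle forces it to be a single point).

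Uniqueness is the easiest part: if $f(p) = p$ and $f(q) = q$ then $f^n(p) = p$, $f^n(q) = q$, and the hypothesis gives $d(p,q) = d(f^n(p), f^n(q)) \geq k\, d(p,q)$; since $k > 1$ this forces $d(p,q) = 0$, so $p = q$.

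I expect the main obstacle to be the converse direction, specifically making rigorous the claim that a bounded orbit forces the existence of a fixed point. The subtlety is that expansiveness alone does not guarantee that backward iterates $f^{-1}$ are defined on all of $X$ (only on $f(X)$), so one cannot blindly run the contraction $\varphi = (f^n)^{-1}$ on the whole space; one must either restrict to $\overline{f^n(X)}$ and check it is invariant and complete, or — the route I would actually take — avoid inverses entirely and use the telescoping/blow-up estimate $d(f^{n(j+1)}(x_0), f^{nj}(x_0)) \geq k^j d(f^n(x_0), x_0)$ directly against the assumed bound $\sup_m d(x_0, f^m(x_0)) < \infty$ to conclude $d(f^n(x_0), x_0) = 0$. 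The step from $f^n(x_0) = x_0$ to $f(x_0) = x_0$ then requires showing the finite cycle $\{x_0, f(x_0), \ldots, f^{n-1}(x_0)\}$ collapses: applying the expansive inequality around the cycle yields $d(x_0, f(x_0)) = d(f^n(x_0), f^{n+1}(x_0)) \geq k\, d(x_0, f(x_0))$, hence $d(x_0, f(x_0)) = 0$. This last observation is where I would be most careful, since it is the only place the particular value $n$ (as opposed to just ``$f^n$ expansive'') genuinely matters.
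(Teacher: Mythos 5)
Your proposal is correct, and the route you actually commit to is genuinely different from (and more elementary than) the one the paper intends. The paper states this corollary without proof, evidently meaning it to follow from Theorem \ref{theorem principal} by taking $g=\mathrm{id}$ and letting $f^{n}$ play the role of the theorem's $f$: the expansive inequality becomes $d(x,y)\leq \frac{1}{k}\,d(f^{n}(x),f^{n}(y))$, i.e.\ the F-contraction condition \eqref{*} with constant $\frac{1}{k}<1$, the identity commutes with $f^{n}$, and a common fixed point of $\mathrm{id}$ and $f^{n}$ is just a fixed point of $f^{n}$, which one then upgrades to a fixed point of $f$ by uniqueness (since $f(p)$ is also fixed by $f^{n}$). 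That reduction buys economy --- one theorem covers the corollary --- but drags in the whole Cauchy-sequence machinery of Lemmas \ref{2}--\ref{6}. Your telescoping argument, by contrast, is self-contained and strictly sharper: from $d(f^{n(j+1)}(x_{0}),f^{nj}(x_{0}))\geq k^{j}d(f^{n}(x_{0}),x_{0})$ against the assumed bound on the orbit you get $f^{n}(x_{0})=x_{0}$ directly, and your cycle-collapse step $d(x_{0},f(x_{0}))=d(f^{n}(x_{0}),f^{n+1}(x_{0}))\geq k\,d(x_{0},f(x_{0}))$ correctly forces $f(x_{0})=x_{0}$; notably this shows the base point $x_{0}$ of the bounded orbit \emph{is} the fixed point, and it uses neither completeness nor continuity of $f$, both of which are hypotheses the corollary (following the theorem) carries but does not need for sufficiency. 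You were also right to distrust the inverse-map route via $\varphi=(f^{n})^{-1}$: the domain issue you flag is real, and your decision to avoid inverses entirely is the correct repair. The uniqueness and necessity parts are handled exactly as the paper would handle them.
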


\begin{remark}
Here, the map $f$ need not be onto. However in corollary 2 of reference \cite{G}, $f$ must be onto.
\end{remark}

\begin{corollary}
Let f and g be commuting mappings of a complete metric space $(X, d)$ into itself where $f$ is continuous. Consider positive integers $ m, n, p $. Suppose the inequality
\begin{equation}
d(g^{m}(x),g^{m}(y)) \leq k d(f^{n}(x),f^{n}(y)), \ \ 0<k<1,\ \ \textit{holds.}
\end{equation}
Then, the maps $f$ and $g$ have a unique common fixed point if and only if there exists an element $x_{0}\in X$ such that the sequence $\{f^{p}(x_{0})\}_{p\geq 0}$ is bounded.
\end{corollary}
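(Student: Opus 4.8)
The plan is to reduce the corollary to the main Theorem~\ref{theorem principal} by passing to the iterated maps $F := f^{n}$ and $G := g^{m}$. First I would check the three structural hypotheses: (i) $F$ and $G$ commute, since $f$ and $g$ commute and hence so do all of their iterates; (ii) $F = f^{n}$ is continuous, being a finite composition of the continuous map $f$; and (iii) the assumed inequality $d(g^{m}(x), g^{m}(y)) \le k\, d(f^{n}(x), f^{n}(y))$ is exactly condition~\eqref{*} for the pair $(G, F)$, with the same constant $k \in (0,1)$. So $(F, G)$ is a pair of commuting self-mappings of the complete space $X$ with $F$ continuous and $G$ a $k$-contraction relative to $F$ in the sense of the paper.

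For the sufficiency direction, assume $\{f^{p}(x_{0})\}_{p\ge 0}$ is bounded for some $x_{0}\in X$. Then its subsequence $\{F^{j}(x_{0})\}_{j\ge 0} = \{f^{nj}(x_{0})\}_{j\ge 0}$ is bounded as well. Theorem~\ref{theorem principal}, applied to $(F, G)$ in place of $(f, g)$, then produces a unique common fixed point $t \in X$ of $F$ and $G$; that is, $f^{n}(t) = t$ and $g^{m}(t) = t$.

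The step I expect to require the most care --- though it is essentially routine --- is the descent from fixed points of the powers $f^{n}, g^{m}$ to fixed points of $f$ and $g$ themselves, carried out via commutativity and the uniqueness clause of Theorem~\ref{theorem principal}. Applying $f$ to $f^{n}(t) = t$ gives $f^{n}(f(t)) = f(f^{n}(t)) = f(t)$, and, using $f\circ g = g\circ f$, also $g^{m}(f(t)) = f(g^{m}(t)) = f(t)$; hence $f(t)$ is again a common fixed point of $F$ and $G$, so uniqueness forces $f(t) = t$. Running the same argument with $g$ in place of $f$ yields $g(t) = t$, so $t$ is a common fixed point of $f$ and $g$. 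Uniqueness then follows at once: any common fixed point $s$ of $f$ and $g$ satisfies $f^{n}(s) = s = g^{m}(s)$, hence $s$ is a common fixed point of $F$ and $G$, so $s = t$.

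Finally, necessity is trivial: if $t$ is a common fixed point of $f$ and $g$, then $f^{p}(t) = t$ for every $p \ge 0$, so the constant sequence $\{f^{p}(t)\}_{p\ge 0}$ is bounded and one may take $x_{0} = t$. This would complete the proof.
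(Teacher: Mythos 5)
Your reduction to Theorem~\ref{theorem principal} via $F=f^{n}$, $G=g^{m}$ is correct and is exactly the argument the paper intends (the paper in fact states this corollary without any proof). The descent step you flag --- showing $f(t)$ and $g(t)$ are again common fixed points of $F$ and $G$ and invoking the uniqueness clause to conclude $f(t)=t=g(t)$ --- is the one genuinely necessary ingredient the paper leaves unstated, and you handle it correctly.
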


The subsequent corollaries deal with the existence of a common fixed point for three commutative self-mappings.

\begin{corollary}\label{final}
Let $f,$ $g$ and $h$ be three continuous and commuting mappings defined on the complete metric space into itself. Assume the condition
\begin{equation}\label{equation2}
d(g(x),g(y)) \leq k d(h(x), h(y)),\  \forall x, y \in X, \ \ \textit{holds.}
\end{equation}
The maps $ f, g $ and $h$ have a unique common fixed point in the subspace $X,$ if and only if there exists an element $x_{0}\in X$ such that the sequence $\{h^{n}(x_{0})\}_{n\geq 0}$ is bounded.
\end{corollary}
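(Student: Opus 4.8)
The plan is to reduce Corollary~\ref{final} to Theorem~\ref{theorem principal} by first extracting the common fixed point of the pair $(g,h)$ and then showing that $f$ must fix that same point. First I would observe that the hypothesis \eqref{equation2} together with the existence of $x_0$ with $\{h^n(x_0)\}_{n\geq 0}$ bounded is exactly the hypothesis of Theorem~\ref{theorem principal} applied to the commuting continuous pair $(h,g)$ (playing the roles of $f$ and $g$ there). Hence $g$ and $h$ admit a unique common fixed point, say $l_1\in X$, with $g(l_1)=h(l_1)=l_1$. This handles two of the three maps.

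Next I would bring in $f$. The key point is that $f$ commutes with both $g$ and $h$, so $f(l_1)$ is again a common fixed point of $g$ and $h$: indeed $g(f(l_1))=f(g(l_1))=f(l_1)$ and $h(f(l_1))=f(h(l_1))=f(l_1)$. By the uniqueness of the common fixed point of $(g,h)$ established in the previous step, we conclude $f(l_1)=l_1$. Therefore $l_1$ is a common fixed point of all three maps $f,g,h$. For uniqueness among the three maps, note that any common fixed point of $f,g,h$ is in particular a common fixed point of $g$ and $h$, and there is only one such; so $l_1$ is the unique common fixed point. This proves sufficiency.

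For necessity, I would argue as in the proof of Theorem~\ref{theorem principal}: if $f,g,h$ have a common fixed point $t$, then in particular $h(t)=t$, so $h^n(t)=t$ for all $n$, and the constant sequence $\{h^n(t)\}_{n\geq 0}$ is trivially bounded; taking $x_0=t$ gives the required element.

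The main obstacle, such as it is, lies in correctly invoking Theorem~\ref{theorem principal}: one must check that the continuity and commutativity hypotheses transfer (here they are given outright for $f,g,h$, so the pair $(h,g)$ inherits them), and one must be careful that the roles are assigned so that $h$ — the map whose iterates are assumed bounded and which appears on the right of the contraction inequality — plays the role of the continuous map $f$ of the theorem, while $g$ plays the role of the contraction $g$. Once the bookkeeping of which map plays which role is pinned down, the rest is a short commutativity-and-uniqueness argument with no further analytic content.
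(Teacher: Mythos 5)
Your proposal is correct and is precisely the argument the paper intends: the paper's proof consists only of the phrase ``Utilizing Theorem \ref{theorem principal} and the uniqueness of the common fixed point,'' and your write-up supplies exactly the missing details (apply the theorem to the pair $(h,g)$, then use commutativity of $f$ with $g,h$ plus uniqueness to force $f(l_1)=l_1$). No differences in approach worth noting.
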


\begin{proof}
Utilizing Theorem \eqref{theorem principal} and the uniqueness of the common fixed point.
\end{proof}

\begin{corollary}
Under the assumption of corollary \ref{final}, if the map $h$ is a bounded one mapping, then the maps $f,$ $g$, and $h$ have a unique common fixed point in $X$. if the map $h$ a bounded one mapping.
\end{corollary}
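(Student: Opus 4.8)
The plan is to deduce this statement from Corollary \ref{final} by producing, from the boundedness of $h$, a point whose $h$-orbit is bounded. Recall that Corollary \ref{final} asserts that the commuting continuous maps $f$, $g$, $h$ satisfying \eqref{equation2} with $0<k<1$ have a unique common fixed point in $X$ \emph{if and only if} there exists some $x_{0}\in X$ with $\{h^{n}(x_{0})\}_{n\ge 0}$ bounded. Hence it suffices to exhibit such an $x_{0}$.

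First I would fix the meaning of ``bounded one mapping'' as: the range $h(X)$ is a bounded subset of $X$, i.e.\ $\operatorname{diam} h(X)<\infty$. Then pick any $x_{0}\in X$ and note that for every $n\ge 1$ one has $h^{n}(x_{0})=h\bigl(h^{n-1}(x_{0})\bigr)\in h(X)$, so the tail $\{h^{n}(x_{0})\}_{n\ge 1}$ lies entirely in the bounded set $h(X)$. Adjoining the single extra term $x_{0}=h^{0}(x_{0})$ keeps the set bounded, so $\{h^{n}(x_{0})\}_{n\ge 0}$ is bounded.

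With this in hand I would invoke Corollary \ref{final}: all of its hypotheses are satisfied ($f,g,h$ continuous and pairwise commuting, inequality \eqref{equation2} with $0<k<1$, and the existence of a point with bounded $h$-orbit), so $f$, $g$, $h$ possess a common fixed point in $X$, and uniqueness is already part of the conclusion of Corollary \ref{final}. Nothing further is required.

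I do not anticipate any genuine obstacle; the only delicate point is the interpretation of ``bounded mapping.'' Under the reading above the argument is immediate. If one instead read it as ``$h$ carries bounded sets to bounded sets,'' one would need a bounded set to seed the iteration, which an arbitrary complete space $X$ need not provide — this is precisely why the bounded-range reading is the intended one.
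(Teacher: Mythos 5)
Your argument is correct and is precisely the intended (unstated) deduction: reading ``bounded mapping'' as $h(X)$ bounded, the tail of any orbit $\{h^{n}(x_{0})\}_{n\geq 1}$ lies in $h(X)$, so the boundedness hypothesis of Corollary \ref{final} holds for every $x_{0}$, and that corollary gives existence and uniqueness. The paper offers no proof of this corollary at all, so your two-line verification (including the sensible discussion of how to interpret ``bounded'') supplies exactly what is missing.
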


In the following, we explore connections with the Theorem 1.1 [1] and Theorem 3.8

\begin{theorem}
Let $h - k - g$ and $g - kk^{\prime} - f$ be two contraction mappings in the complete metric $X$ into itself. Assume they are continuous commuting with each other, and there exists an element $x_{0} \in  X$ for which the Picard sequence $\{f_{n} (x_{0})\}_{n\geq}$ is bounded in $X.$ Then, the maps $f , g,$ and $h$ have a unique common fixed point in $X.$  

\end{theorem}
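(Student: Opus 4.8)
The plan is to reduce the three-map statement to the two-map Theorem \ref{theorem principal} by observing that the two hypothesized contractions chain together. First I would note that composing the inequalities $d(h(x),h(y))\le k\, d(g(x),g(y))$ and $d(g(x),g(y))\le kk^{\prime}\, d(f(x),f(y))$ gives
\begin{equation}
d(h(x),h(y)) \le k\cdot kk^{\prime}\, d(f(x),f(y)) = k^{2}k^{\prime}\, d(f(x),f(y)), \qquad \forall x,y\in X .
\end{equation}
Since $0<k<1$ and (implicitly) $0<kk^{\prime}<1$, the constant $k^{2}k^{\prime}$ lies in $(0,1)$, so the pair $(h,f)$ satisfies condition \eqref{*}, i.e. $h-k^{2}k^{\prime}-f$ is a continuous contraction. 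Because $h$, $g$, $f$ all commute pairwise and $f$ is continuous, the pair $(f,h)$ satisfies every hypothesis of Theorem \ref{theorem principal} once we supply an element with bounded $f$-Picard orbit — which is exactly the assumed $x_{0}$ with $\{f^{n}(x_{0})\}_{n\ge 0}$ bounded. Applying Theorem \ref{theorem principal} to $(f,h)$ then yields a unique point $t\in X$ with $f(t)=h(t)=t$.

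Next I would show that this $t$ is also fixed by $g$. The idea is the same uniqueness-of-common-fixed-point device used at the end of the proof of Theorem \ref{theorem principal}. From $f(t)=t$ and commutativity, $g(t)=g(f(t))=f(g(t))$, so $g(t)$ is a fixed point of $f$; similarly, using $h-k-g$ together with $h(t)=t$, one estimates $d(t,h(g(t)))=d(h(t),h(g(t)))\le k\, d(g(t),g(g(t)))$, and chaining with $g-kk^{\prime}-f$ and $f(g(t))=g(t)$ forces the orbit of $g(t)$ under $h$ to collapse, giving $h(g(t))=g(t)$. Then $g(t)$ is a common fixed point of the pair $(f,h)$; by the uniqueness asserted in Theorem \ref{theorem principal}, $g(t)=t$. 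Hence $f(t)=g(t)=h(t)=t$, so $t$ is a common fixed point of all three maps.

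Finally, uniqueness among all three maps is immediate: any common fixed point of $f,g,h$ is in particular a common fixed point of the pair $(f,h)$, and Theorem \ref{theorem principal} already guarantees that such a point is unique.

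I expect the main obstacle to be the middle step — cleanly verifying that $g(t)=t$. The naive estimate $d(g(t),t)=d(g(t),g(t))$ is vacuous, so one must route through $h$ or through the iterated maps to produce a genuine contraction factor: the cleanest route is to prove that $g(t)$ is a common fixed point of $(f,h)$ and then invoke uniqueness from Theorem \ref{theorem principal}, rather than trying to bound $d(g(t),t)$ directly. A secondary technical point is that the statement writes the second contraction constant as $kk^{\prime}$ without explicitly assuming $kk^{\prime}<1$; I would make that hypothesis explicit (or assume $0<k^{\prime}$ and $kk^{\prime}<1$) so that the composed constant $k^{2}k^{\prime}$ is admissible in condition \eqref{*}.
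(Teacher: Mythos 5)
The paper states this theorem without supplying any proof, so there is nothing to compare your argument against line by line; I can only assess it on its own terms, and it is essentially correct. Chaining $d(h(x),h(y))\le k\,d(g(x),g(y))\le k\cdot kk^{\prime}\,d(f(x),f(y))$ gives $h-k^{2}k^{\prime}-f$ with $k^{2}k^{\prime}=k\,(kk^{\prime})<1$ (your worry about $kk^{\prime}<1$ is already covered by the paper's Definition 2.1, which builds $0<\kappa<1$ into the notation $g-\kappa-f$), so Theorem \ref{theorem principal} applies to the pair $(f,h)$ and produces a unique $t$ with $f(t)=h(t)=t$. For the middle step you make things harder than necessary: the metric estimate $d(t,h(g(t)))\le k\,d(g(t),g(g(t)))\le k^{2}k^{\prime}\,d(t,g(t))$ does not by itself "collapse the orbit," and you should drop it. The clean route you mention at the end is not merely cleaner — it is immediate and requires no contraction at all: by commutativity $f(g(t))=g(f(t))=g(t)$ and $h(g(t))=g(h(t))=g(t)$, so $g(t)$ is a common fixed point of the pair $(f,h)$ and uniqueness in Theorem \ref{theorem principal} forces $g(t)=t$. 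Uniqueness for the triple then follows as you say. A natural alternative decomposition, which the paper may have intended, is to apply Theorem \ref{theorem principal} first to the pair $(f,g)$ (since $g-kk^{\prime}-f$ is already a hypothesis, no chaining needed) and then absorb $h$ by the same commutativity-plus-uniqueness device; both routes work, and yours costs only the one extra (harmless) composition of inequalities.
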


\begin{proposition}
Let $g_{1} - k - f_{1}$ and $g_{2} - k^{\prime} - f_{2}$ be two continuous contraction mappings in the compact $M$ into itself, where $f_{1}$ commutes with $g_{2}$. Assume  $g_{1} \circ g_{2}$ and $f_{2} \circ f_{1}$ commute with each other, and  $\{f_{2}\circ f_{1}(x_{0})\}_{n\geq 0}$ is bounded in $X.$ Then, $g_{1} \circ g_{2}$ and $f_{2} \circ f_{1}$ have a unique common fixed point in $X.$ 
\end{proposition}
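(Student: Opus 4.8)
The plan is to reduce the Proposition to a direct application of Theorem~\ref{theorem principal} by identifying the two self-mappings to which the theorem will be applied. First I would set $G := g_{1}\circ g_{2}$ and $F := f_{2}\circ f_{1}$, both self-mappings of the compact (hence complete) metric space $M$, and I would show that the pair $(F,G)$ satisfies all hypotheses of Theorem~\ref{theorem principal}: commutativity, continuity of $F$, the F-contraction inequality~\eqref{*}, and the existence of a point whose $F$-Picard orbit is bounded. Commutativity of $G$ and $F$ is assumed outright; continuity of $F=f_{2}\circ f_{1}$ follows from continuity of $f_{1}$ and $f_{2}$ (each $f_{i}$ is continuous as the ``$f$''-component of a continuous contraction pair); and boundedness of $\{(f_{2}\circ f_{1})^{n}(x_{0})\}_{n\geq 0}$ is part of the hypothesis (indeed automatic, since $M$ is compact). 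So the only real content is verifying the contraction estimate for $G$ against $F$.

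The key step is therefore the chain of inequalities. Using the contraction $g_{1}-k-f_{1}$ we get $d(g_{1}(a),g_{1}(b))\leq k\, d(f_{1}(a),f_{1}(b))$, and using $g_{2}-k^{\prime}-f_{2}$ we get $d(g_{2}(a),g_{2}(b))\leq k^{\prime} d(f_{2}(a),f_{2}(b))$. Applying these successively to $a=g_{2}(x)$, $b=g_{2}(y)$ gives
\begin{equation*}
d\big(g_{1}(g_{2}(x)),g_{1}(g_{2}(y))\big)\leq k\, d\big(f_{1}(g_{2}(x)),f_{1}(g_{2}(y))\big).
\end{equation*}
Now I invoke the assumption that $f_{1}$ commutes with $g_{2}$, so $f_{1}\circ g_{2}=g_{2}\circ f_{1}$, turning the right-hand side into $k\, d\big(g_{2}(f_{1}(x)),g_{2}(f_{1}(y))\big)$; applying the second contraction with $a=f_{1}(x)$, $b=f_{1}(y)$ bounds this by $k k^{\prime} d\big(f_{2}(f_{1}(x)),f_{2}(f_{1}(y))\big)=k k^{\prime} d\big(F(x),F(y)\big)$. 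Since $0<k<1$ and $0<k^{\prime}<1$, the constant $\kappa:=k k^{\prime}$ lies in $(0,1)$, so $G-\kappa-F$ is an F-contraction in the sense of~\eqref{*}.

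With these verifications in hand, Theorem~\ref{theorem principal} applies to $F=f_{2}\circ f_{1}$ and $G=g_{1}\circ g_{2}$ and yields a unique common fixed point $p\in M$ with $F(p)=G(p)=p$, which is exactly the assertion. I expect the main (and only genuine) obstacle to be the bookkeeping in the contraction chain: one must apply the commutation $f_{1}\circ g_{2}=g_{2}\circ f_{1}$ at precisely the right place, since the other conceivable commutations (e.g.\ between $g_{1}$ and $g_{2}$, or $f_{1}$ and $f_{2}$) are \emph{not} assumed. A secondary subtlety worth a sentence is that the statement refers to ``the compact $M$'' but the conclusion to ``$X$''; I would simply note that compactness of $M$ implies completeness, so the hypotheses of Theorem~\ref{theorem principal} are met, and the boundedness hypothesis on $\{F^{n}(x_{0})\}$ is in fact redundant in the compact setting.
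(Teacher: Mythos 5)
Your reduction is correct and is evidently the argument the paper intends: the proposition is stated without proof right after Theorem~\ref{theorem principal}, and the only content is exactly the chain $d(g_{1}g_{2}x,g_{1}g_{2}y)\leq k\,d(f_{1}g_{2}x,f_{1}g_{2}y)=k\,d(g_{2}f_{1}x,g_{2}f_{1}y)\leq kk^{\prime}d(f_{2}f_{1}x,f_{2}f_{1}y)$, which you carry out with the commutation $f_{1}\circ g_{2}=g_{2}\circ f_{1}$ inserted at the right spot. Your side remarks (compactness gives completeness and makes the boundedness hypothesis redundant, $kk^{\prime}\in(0,1)$) are also accurate, so the proposal stands as a complete proof.
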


Lastly, we observe that the requirement \ref{theorem principal} of our theorem can be more weakened by demanding that $(X,d)$ be compact.
\begin{corollary}
 For a positive integer $n\geq 1$ and a positive real number  $k>1$,let $f$ be a continuous map defined on the compact metric space $(X, d)$ into itself.  If the inequality holds, 
\begin{equation}
d(f^{n}(x),f^{n}(y))\geq k d(x,y),
\end{equation}
then the map $f$ has a unique fixed point in $X.$
\end{corollary}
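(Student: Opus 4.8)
The plan is to deduce this from the preceding corollary on complete spaces, namely the one stating that if $f$ is continuous on a complete metric space $X$ and $d(f^{n}(x),f^{n}(y))\ge k\,d(x,y)$ with $k>1$, then $f$ has a unique fixed point in $X$ if and only if there exists $x_{0}\in X$ for which the Picard sequence $\{f^{m}(x_{0})\}_{m\ge 1}$ is bounded. All one has to observe is that compactness makes both ingredients of that corollary automatic.

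First I would record the two elementary facts about a compact metric space $(X,d)$: it is complete, and it is bounded (being totally bounded). The first fact shows that the preceding corollary applies verbatim to this $f$, $n$, and $k$. For the second, fix any $x_{0}\in X$ (a compact metric space is in particular non-empty): the Picard sequence $\{f^{m}(x_{0})\}_{m\ge 1}$ lies inside $X$, which is bounded, hence it is a bounded sequence. Thus the equivalence in the preceding corollary applies and immediately delivers a unique fixed point of $f$ in $X$.

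There is essentially no obstacle in this argument; the whole content is the remark that compactness simultaneously supplies completeness and boundedness of every orbit, which is exactly what replaces the bounded-Picard-sequence hypothesis.

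If instead one wanted a self-contained proof not invoking the earlier corollary, the only non-routine point would be to show that $f^{n}$ is surjective. It is injective because $k>1$ (if $f^{n}(x)=f^{n}(y)$ then $d(x,y)\le k^{-1}d(f^{n}(x),f^{n}(y))=0$), and $f^{n}(X)$ is compact, hence closed. Passing to the non-empty compact set $Y=\bigcap_{m\ge 0}f^{m}(X)$ one checks $f(Y)=Y$, so $f^{n}|_{Y}$ is a bijection of the complete space $Y$ whose inverse satisfies $d\bigl((f^{n}|_{Y})^{-1}(u),(f^{n}|_{Y})^{-1}(v)\bigr)\le k^{-1}d(u,v)$, a genuine contraction. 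Banach's theorem then yields a unique fixed point $p$ of $f^{n}$ in $Y$; using $f\circ f^{n}=f^{n}\circ f$ one sees that $f(p)$ is also a fixed point of $f^{n}$ in $Y$, so $f(p)=p$. Uniqueness of the fixed point of $f$ in all of $X$ is immediate, since $f(p)=p$ and $f(q)=q$ give $d(p,q)=d(f^{n}(p),f^{n}(q))\ge k\,d(p,q)$, forcing $d(p,q)=0$.
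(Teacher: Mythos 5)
Your first argument is correct and is exactly the paper's intended route: the paper states this corollary without proof immediately after remarking that compactness weakens the hypothesis of the preceding complete-space corollary, precisely because compactness supplies both completeness and the boundedness of every Picard orbit. The self-contained alternative you sketch is also sound (though more machinery than needed --- note that on a compact space the inequality $\mathrm{diam}(X)\geq d(f^{jn}(x),f^{jn}(y))\geq k^{j}d(x,y)$ for all $j$ already forces $X$ to be a singleton), but it is not required for the deduction.
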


\begin{corollary}
Let $f,$ $g$ and $h$ be three and commuting mappings on the compact space $(M,d)$ into itself. Assume $h$ and $g$ are continuous and satisfy a certain condition \ref{equation 2}. Then,the maps $f$, $g$ and $h$ have a unique common fixed point in $X$.
\end{corollary}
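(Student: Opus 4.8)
The plan is to reduce the statement to Corollary~\ref{final} by observing that on a compact metric space the boundedness hypothesis on the Picard sequence is automatic. First I would record the two standard facts about a compact metric space $(M,d)$: it is complete, and it is bounded (indeed $\operatorname{diam}(M)<\infty$, since $M$ is covered by finitely many balls of radius $1$). These are exactly the two ingredients that the earlier machinery needs in order to run.

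Next I would choose any point $x_{0}\in M$ and consider the Picard sequence $\{h^{n}(x_{0})\}_{n\geq 0}$. Because $h$ maps $M$ into itself, this sequence lies in $M$, and since $\operatorname{diam}(M)<\infty$ it is bounded; no further work is required here. Thus the hypothesis ``there exists $x_{0}\in X$ such that $\{h^{n}(x_{0})\}_{n\geq 0}$ is bounded'' in Corollary~\ref{final} is satisfied trivially.

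With $M$ complete, $f,g,h$ commuting, $h$ and $g$ continuous, the F-contraction relation \eqref{equation2} between $g$ and $h$ in force, and the bounded Picard sequence provided by compactness, every hypothesis of Corollary~\ref{final} holds. Applying that corollary yields a common fixed point of $f$, $g$ and $h$ in $M$, and its uniqueness is inherited from the uniqueness already established there (which ultimately comes from the $(1-k)d(l_{1},l_{2})\le 0$ argument with $0<k<1$ in the proof of Theorem~\ref{theorem principal}).

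The only genuine point of care — rather than a real obstacle — is bookkeeping: the statement writes $M$ for the ambient space but $X$ in the conclusion, and cites ``condition \ref{equation 2}'' where it means the F-contraction inequality \eqref{equation2} relating $g$ and $h$; I would make these identifications explicit at the outset so that the invocation of Corollary~\ref{final} is unambiguous. Beyond that, the argument is simply ``compact $\Rightarrow$ complete and bounded $\Rightarrow$ Corollary~\ref{final} applies,'' so there is no computational content to grind through.
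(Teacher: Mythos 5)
Your reduction is exactly the argument the paper intends (the paper states this corollary without proof, immediately after remarking that compactness lets one drop the boundedness requirement): a compact metric space is complete and has finite diameter, so $\{h^{n}(x_{0})\}_{n\geq 0}$ is automatically bounded and the earlier machinery applies. The one point you should not gloss over is that Corollary~\ref{final} as stated requires all three maps $f$, $g$, $h$ to be continuous, whereas the present statement assumes continuity only of $g$ and $h$; so you cannot invoke Corollary~\ref{final} verbatim. The fix is short: apply Theorem~\ref{theorem principal} to the pair $(h,g)$ (with $h$ in the role of the continuous map $f$ of that theorem) to get a unique common fixed point $p$ of $g$ and $h$; then commutativity gives $g(f(p))=f(g(p))=f(p)$ and $h(f(p))=f(h(p))=f(p)$, so $f(p)$ is again a common fixed point of $g$ and $h$, whence $f(p)=p$ by uniqueness — no continuity of $f$ is needed. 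With that one-line patch your proof is complete and coincides with the paper's intended route; uniqueness follows from the $(1-k)d(l_{1},l_{2})\leq 0$ argument as you say.
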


\begin{corollary}
All mappings $f$ commutate with a contraction mapping $g$, defined on the compact metric space $(M,d)$ into itself, have a unique common fixed point.
\end{corollary}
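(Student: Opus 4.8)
The plan is to reduce the statement to the case $f=\mathrm{id}$, which is already covered by Theorem~\ref{theorem principal} (this is precisely the reduction recorded in the Remark recovering Banach's principle). Since $(M,d)$ is compact it is complete, and every sequence in $M$ is bounded; in particular, for any $x_{0}\in M$ the Picard sequence $\{g^{n}(x_{0})\}_{n\geq 0}$ is bounded. Observe also that a contraction $g$ (condition \eqref{*} with $f=\mathrm{id}$, $0<k<1$) is $k$-Lipschitz, hence continuous. Applying Theorem~\ref{theorem principal} to the commuting pair $(\mathrm{id},g)$ therefore produces a unique point $z\in M$ with $g(z)=z$; equivalently, the contraction $g$ has a unique fixed point.

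The next step is to promote $z$ to a common fixed point of $f$ and $g$ using commutativity. From $f\circ g=g\circ f$ and $g(z)=z$ we obtain
$$ g\bigl(f(z)\bigr)=f\bigl(g(z)\bigr)=f(z), $$
so $f(z)$ is a fixed point of $g$. Since $z$ is the only fixed point of $g$, this forces $f(z)=z$, and hence $f(z)=g(z)=z$: the point $z$ is a common fixed point of $f$ and $g$.

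Finally, uniqueness is immediate. If $w$ is any common fixed point of $f$ and $g$ then, in particular, $g(w)=w$, and by the uniqueness of the fixed point of the contraction $g$ we get $w=z$. This finishes the proof.

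I do not expect a genuine obstacle: the content is essentially the classical observation that a map commuting with a Banach contraction fixes the contraction's fixed point. The only points deserving care are interpretive — reading ``contraction mapping $g$'' in the sense of condition \eqref{*} with $f=\mathrm{id}$, and noting that compactness is used exactly to make the boundedness hypothesis of Theorem~\ref{theorem principal} automatic, so that no hypothesis on $f$ beyond commuting with $g$ is required.
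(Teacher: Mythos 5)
Your proof is correct, and since the paper states this corollary without any proof, your argument is the natural one to supply: reduce to the Banach case via Theorem~\ref{theorem principal} applied to the pair $(\mathrm{id},g)$, then use commutativity to show $f(z)$ is again a fixed point of $g$ and hence equals $z$. One small observation: with $f=\mathrm{id}$ the Picard sequence $\{f^{n}(x_{0})\}$ is constant and hence bounded automatically, so your argument in fact never uses compactness beyond completeness --- which only strengthens the conclusion relative to the stated hypotheses.
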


\begin{corollary}
Consider a sequence of continuous functions $f_{n}$  defined on the compact set $K$ converging uniformly to a function $f$. If there exists a contraction mapping $g$ that commutes with each $f_{n}$ for all positive integers $n$, then $f$ and $g$ possess a unique common fixed point.
\end{corollary}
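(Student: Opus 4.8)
The plan is to push everything through the classical Banach fixed point theorem applied to $g$ alone, and then let the commutativity and the convergence do the rest. First I would observe that a compact metric space is complete, so the contraction mapping $g$ (here meaning $d(g(x),g(y))\le k\,d(x,y)$ for some $0<k<1$) has, by Banach's theorem, a unique fixed point $p\in K$, i.e.\ $g(p)=p$. This is exactly the situation covered by the earlier corollary on a single contraction commuting with arbitrary maps: applying it to each $f_n$ shows that $f_n$ and $g$ share a common fixed point, which must coincide with the unique fixed point $p$ of $g$. If one prefers a self-contained line, the commutativity gives directly $g\bigl(f_n(p)\bigr)=f_n\bigl(g(p)\bigr)=f_n(p)$, so $f_n(p)$ is a fixed point of $g$, whence $f_n(p)=p$ for every $n$.

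Next I would pass to the limit. Uniform convergence of $f_n$ to $f$ on $K$ entails pointwise convergence, in particular at the point $p$, so $f(p)=\lim_{n\to\infty}f_n(p)=\lim_{n\to\infty}p=p$. Thus $p$ is a common fixed point of $f$ and $g$. For uniqueness, any common fixed point $q$ of $f$ and $g$ satisfies $g(q)=q$, and the uniqueness of the fixed point of the contraction $g$ forces $q=p$. This completes the argument.

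The step I expect to be the ``hardest'' is in fact entirely routine: the only two points that deserve a word of justification are that compactness of $K$ supplies the completeness needed to invoke Banach's theorem for $g$, and that uniform convergence $f_n\to f$ yields the single pointwise statement $f_n(p)\to f(p)$ used in the limit passage. It is worth remarking that neither the continuity of the $f_n$ nor that of $f$ is actually used; the hypotheses are stated for symmetry with the preceding corollaries.
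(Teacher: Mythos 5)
Your argument is correct: the paper states this corollary without proof, and your reduction --- compactness of $K$ gives completeness, Banach's theorem gives the unique fixed point $p$ of $g$, commutativity gives $g(f_n(p))=f_n(g(p))=f_n(p)$ and hence $f_n(p)=p$ for every $n$, and pointwise convergence at $p$ then yields $f(p)=p$ --- is exactly the intended route via the preceding single-mapping corollary. Your closing remark that the continuity of the $f_n$ (and of $f$) is never actually used is also accurate.
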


\begin{example}
Let the functions $f$ and $g$ be defined on the interval $[\frac{1}{2}, \frac{3}{4}]$ and map into itself. Specifically,  
$g(x) = xe^{2x+1}$; $f(x) =\frac{-1}{2}x +\frac{3}{4}$ \\
It is evident that $f$ and $g$ commute, as demonstrated by the calculation of the derivative of
$\mid \frac{g^{\prime}(x)}{f^{\prime}(x)} \mid=2\mid e^{1-2x}(2x-1) \mid $. Additionally, the contraction property of $g$ is apparent. Indeed, for all x in the interval ,\\$\forall x \in [\frac{1}{2}, \frac{3}{4}],$ we have: $\mid \frac{g^{\prime}(x)}{f^{\prime}(x)} \mid<2e^{-1}<1$. Therefore, $g$ is an F-contraction. The unique common fixed point of $f$ and $g$ is determined to be $f(\frac{1}{2})=g(\frac{1}{2})=\frac{1}{2}$.

\end{example}

\section{Conclusions}
  This study delves into the identification of common fixed points among commuting mappings, presenting a comprehensive examination of both necessary and sufficient conditions. This innovative approach introduces a paradigm shift, guaranteeing the existence of a shared fixed point. Its versatility spans across various disciplines, encompassing economic sciences, as well as other realms of mathematics and physics, where it can be effectively deployed through numerical programs.
  
\section*{Acknowledgments}
The research outlined in this paper received invaluable support from the research grant provided by Mohamed El Bachir El Ibrahimi University in Borj Bou Arréridj, Algeria, and the Algerian Ministry of Higher Education and Scientific Research. The project, titled "Applied Algebra and Functional Analysis" (Project(PRUF) code: C00L03UN340120230004), was made possible through their generous funding.

The authors express their sincere gratitude to the referees for their meticulous review of the paper and for offering valuable suggestions and comments. Their thoughtful insights significantly contributed to enhancing the quality and rigor of this work.

\clearpage

\end{document}